\definecolor{darkred}{rgb}{1,0,0} 
\definecolor{darkgreen}{rgb}{0,0.8,0}
\definecolor{darkblue}{rgb}{0,0,1}
 \def\bt{\begin{theorem}}
 	\def\el{\end{lemma}}
 \def\bl{\begin{lemma}}
 	\def\et{\end{theorem}}
 \def\bp{\begin{proposition}}
 	\def\ep{\end{proposition}}
 \def\bd{\begin{definition}}
 	\def\ed{\end{definition}}
 \def\br{\begin{remark}}
 	\def\er{\end{remark}}
 \newcommand{\e}[1]{\bold{e_{#1}}}
 \def\R{{\mathbb R}}
\def\S{{\mathbb S}}
\def\S1{{\mathbb S^1}}
\def\e{{\bold e}}
 \numberwithin{equation}{section}
 \theoremstyle{plain}
 \newtheorem*{theorem*}{Theorem}
 \newtheorem{theorem}{Theorem}[section]
 \newtheorem{lemma}[theorem]{Lemma}
 \newtheorem{proposition}[theorem]{Proposition}
 \theoremstyle{definition}
 \newtheorem{definition}[theorem]{Definition}
 \theoremstyle{remark}
 \newtheorem{remark}[theorem]{Remark}
 \newcommand{\p}{\partial}
 \theoremstyle{plain} 
\begin{document}
 	
 	\title{Totally integrable symplectic billiards are ellipses}  
\author{Luca Baracco, Olga Bernardi}
\address{Dipartimento di Matematica Tullio Levi-Civita, Universit\`a di Padova, via Trieste 63, 35121 Padova, Italy}
\email{baracco@math.unipd.it, obern@math.unipd.it}

 
\maketitle

\begin{abstract}

In this paper we prove that a totally integrable strictly-convex symplectic billiard table, whose boundary has everywhere strictly positive curvature, must be an ellipse. The proof, inspired by the analogous result of Bialy for Birkhoff billiards, uses the affine equivariance of the symplectic billiard map.
\newline\noindent
\subjclass[MSC2020]{Primary 37E40. Secondary 37J35 }
\end{abstract}

\section{Introduction}
\noindent Symplectic billiards were introduced by P. Albers and S. Tabachnikov \cite{AT} in 2018 as a simple dynamical system where --opposed to Birkhoff billiards-- the generating function is the area form instead of the length. In a planar strictly-convex region with smooth boundary, this variational formulation gives rise to the ``symplectic billiard map'' (see Figure \ref{Figuraccia}): if the points $x$ and $z$ are fixed, the position of the intermediate point $y$ on the boundary is determined by the condition that the tangent line at $y$ is parallel to the segment joining $x$ and $z$. As showed in \cite{AT}[Section 2], the symplectic billiard map results a monotone, twist map, preserving an area form. Moreover, it commutes with affine transformations of the plane. \\
\indent In general, a crucial dynamical behavior for billiards is the so-called integrability. We say that a billiard is \textit{integrable} if there exists a regular (at least continuous) foliation of the phase-space consisting of invariant curves for the billiard map. According to the notion of integrability one considers, the foliation is required to be of full measure and/or the corresponding invariant curves closed. Moreover, a billiard is called \textit{totally integrable} if the phase-space is fully foliated by continuous invariant curves which are not null-homotopic. In literature, \textit{full global integrability} and \textit{$C^0$ integrability} are used as synonyms of total integrability, see e.g. \cite{KaSo}[End of Page 8] and \cite{Arnaud}[Page 884] (in the Hamiltonian setting). \\
\indent We stress that integrability remains nowadays an unanswered property also for Birkhoff billiards. In fact, the celebrated Birkhoff conjecture (first appeared in \cite{Bir} and \cite{Por}), which says that the unique integrable Birkhoff billiards are circles and ellipses, is still open. It is worth noting that the so-called perturbative and local Birkhoff conjectures have been successfully studied, see \cite{AvAltri}, \cite{KaSo} and \cite{HKS} respectively. Moreover, M. Bialy and A.E. Mironov \cite{BiMi} recently proved the Birkhoff-Poritsky conjecture for centrally-symmetric $C^2$-smooth convex planar billiards. We refer to \cite{KS} for a concise and self-contained discussion of integrability for Birkhoff billiards. \\
\indent Concerning the total integrability, in 1992 Bialy proved that {\textit{totally integrable Birkhoff billiards are necessarily circles}}. In order to prove his result, Bialy has been inspired by a method by E. Hopf for Riemannian metrics without conjugate points. In particular, he formulated a discrete version of this argument for positive twist maps of the open strip $\mathbb{T} \times (-1,1)$, preserving the Lebesgue measure. In the case of Birkhoff billiards, Bialy's construction of a non-vanishing Jacobi field along each billiard configuration --combined with the planar isoperimetric inequality-- gives the result. An alternative approach to such a theorem was proposed some years later by M.P. Wojtkowski, see \cite{Wo}. This new proof of Bialy's result is based on dynamical --instead of variational-- arguments, like the so-called ``mirror equation'' from geometric optics.  \\
\indent We shall assume that the boundary of the billiard table has everywhere strictly positive curvature. The aim of this paper is to prove that {\textit{totally integrable symplectic billiards are necessarily ellipses}}. In other words:
\begin{theorem} \label{bialy}
If the phase-space of the symplectic billiard map is foliated by continuous invariant closed curves not null-homotopic then the billiard table is an ellipse.
\end{theorem}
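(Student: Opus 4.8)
The plan is to transpose Bialy's Hopf-type argument to the symplectic setting, systematically replacing Euclidean data by equi-affine data so as to exploit the affine equivariance. Since the curvature is everywhere positive, I would first parametrize the boundary $\gamma$ by its equi-affine arc length $s$, normalized by $\gamma'\wedge\gamma''\equiv 1$, and write the symplectic billiard generating function as $h(s_1,s_2)=\gamma(s_1)\wedge\gamma(s_2)$. One checks that the critical point equation $h_2(s_{n-1},s_n)+h_1(s_n,s_{n+1})=0$ is precisely the Albers--Tabachnikov rule $\gamma'(s_n)\parallel\gamma(s_{n-1})-\gamma(s_{n+1})$, that the twist condition reads $h_{12}=\gamma'(s_1)\wedge\gamma'(s_2)\neq 0$, and that the invariant area form is $\omega=a(s_1,s_2)\,ds_1\wedge ds_2$ with $a(s_1,s_2)=\gamma'(s_1)\wedge\gamma'(s_2)$.

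Linearizing the orbit equation along a configuration yields the discrete Jacobi equation
\[
a_{n-1}J_{n-1}+\mu_nJ_n+a_nJ_{n+1}=0,
\]
where $a_n=\gamma'(s_n)\wedge\gamma'(s_{n+1})$ and $\mu_n$ is defined by $\gamma(s_{n-1})-\gamma(s_{n+1})=\mu_n\,\gamma'(s_n)$; here the normalization $\gamma'\wedge\gamma''\equiv 1$ is exactly what collapses the coefficient of $J_n$ to $\mu_n$. This second-order recursion is the symplectic analogue of the mirror equation used by Wojtkowski. By Birkhoff's theorem the essential (non null-homotopic) invariant curves of the total foliation are Lipschitz graphs, and positivity of the twist then rules out conjugate points along any configuration; differentiating the foliation in the transverse direction therefore produces, for almost every orbit, a Jacobi field $J_n$ that never vanishes, the symplectic counterpart of Bialy's non-vanishing Jacobi field.

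Setting $p_n=J_{n+1}/J_n>0$ turns the Jacobi equation into the Riccati recursion $a_np_n+a_{n-1}/p_{n-1}=-\mu_n$, i.e. $(ap)\circ\Phi+a/p=-\mu$ on phase space, where $\Phi$ is the billiard map. Integrating against the invariant measure $d\mu_{\mathrm{inv}}=a\,ds_1\,ds_2$ and using $\Phi$-invariance to cancel the exact term $(ap)\circ\Phi-ap$ leaves
\[
\int(-\mu)\,d\mu_{\mathrm{inv}}=\int a\Big(p+\tfrac1p\Big)\,d\mu_{\mathrm{inv}}\ \ge\ 2\int a\,d\mu_{\mathrm{inv}},
\]
by $p+1/p\ge 2$, with equality if and only if $p\equiv 1$, that is, the foliation Jacobi field is constant along every orbit. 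A reassuring check: on the disk the symplectic billiard sends $s_n$ to the angular midpoint, so $-\mu_n=2\sin\delta=2a_n$ holds pointwise, the Jacobi field is constant, and the inequality is saturated --- as it must be for the model (affine) case.

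It remains to evaluate the two equi-affine invariants appearing above in terms of the affine perimeter $L=\oint ds$ and the enclosed area $A$ (following Bialy, one may also integrate along the invariant curves and pass to the boundary of phase space, where the affine length naturally appears), and to compare with the affine isoperimetric inequality $L^3\le 8\pi^2 A$, whose equality case is attained exactly by ellipses. The expected outcome is that integrability forces the reverse inequality $L^3\ge 8\pi^2 A$, hence equality, hence that $\gamma$ is an ellipse; the equality clause $p\equiv 1$ dovetails with this, since a configuration-independent Jacobi field is the infinitesimal generator of a one-parameter equi-affine symmetry group, which among closed convex curves only ellipses possess. I expect the genuine difficulties to be twofold. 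Analytically, the total foliation is a priori only continuous, so constructing the non-vanishing Jacobi field and justifying both the differentiation and the integration against $d\mu_{\mathrm{inv}}$ --- especially the integrability of $\log J$ near the boundary of phase space --- requires the careful monotonicity and regularization arguments familiar from the Birkhoff/Hopf theory. Geometrically, the decisive step is the exact identification of $\int(-\mu)\,d\mu_{\mathrm{inv}}$ and $\int a\,d\mu_{\mathrm{inv}}$ with $L$ and $A$, including the normalizing constant $8\pi^2$, and the verification that equality propagates to constancy of the affine curvature, i.e. to $\gamma'''=-k\,\gamma'$ with $k$ constant, forcing the closed convex curve $\gamma$ to be an ellipse.
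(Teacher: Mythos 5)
Your reduction of total integrability to the absence of conjugate points (Birkhoff's graph theorem for the essential invariant curves plus positivity of the twist), and the Hopf--Bialy step of integrating the Riccati relation against the invariant measure, coincide with the paper's route: your inequality $\int(-\mu)\,d\mu_{\mathrm{inv}}\ge 2\int a\,d\mu_{\mathrm{inv}}$ is exactly inequality \eqref{eccola2}. The genuine gap is in the step you yourself call ``decisive'', and it is fatal as stated: the two sides do \emph{not} both reduce to equi-affine invariants. In the tangent-angle parametrization with support function $p$ and reparametrizing weight $h$ (Lemma \ref{7}), the left side becomes $2A(D)\int (p''+p)^2h^2\,d\alpha$, which with the equi-affine choice $h=(p''+p)^{-2/3}$ does equal $2A(D)\,L_{\mathrm{aff}}$; but the right side becomes
\begin{equation*}
2\iint L_{12}^2 \;=\; \Bigl(\int_0^{2\pi} (p''+p)^{4/3}\,d\alpha\Bigr)^2-\Bigl(\int_0^{2\pi} (p''+p)^{4/3}\cos(2\alpha)\,d\alpha\Bigr)^2-\Bigl(\int_0^{2\pi} (p''+p)^{4/3}\sin(2\alpha)\,d\alpha\Bigr)^2,
\end{equation*}
which is not a function of $L_{\mathrm{aff}}=\int(p''+p)^{2/3}d\alpha$ and $A$: the second-harmonic terms are not affine invariants and do not vanish for a general convex body. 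Moreover, even if they did vanish, Cauchy--Schwarz gives $\int(p''+p)^{4/3}\ge L_{\mathrm{aff}}^2/2\pi$, and chaining this with the no-conjugate-points bound yields $L_{\mathrm{aff}}^3\le 8\pi^2 A$ --- the affine isoperimetric inequality in its always-true direction, not the converse $L_{\mathrm{aff}}^3\ge 8\pi^2A$ that your scheme needs. So the affine-isoperimetric endgame cannot close; the same obstruction blocks your final remark that equality would force a one-parameter equi-affine symmetry, which in any case is asserted rather than proved.

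The paper resolves precisely this obstruction, but differently: instead of seeking an affine-invariant formulation, it first applies a unimodular affine normalization $\varphi_{a,\sigma}$ (a rotation followed by $(x,y)\mapsto(ax,y/a)$) chosen so that the second Fourier coefficients of the support function vanish, $\int p_{a,\sigma}\cos(2\psi)\,d\psi=\int p_{a,\sigma}\sin(2\psi)\,d\psi=0$. This is Proposition \ref{P1}, proved by a topological winding argument on the two-parameter family $(a,\sigma)$, and it is the real content missing from your proposal. After that normalization one uses Euclidean arc length ($h=1/(p''+p)$, Remark \ref{R1}): the harmonic terms drop out, the inequality reads $\ell(\partial D_{a,\sigma})^2\le 4\pi A(D_{a,\sigma})$, and the ordinary planar isoperimetric inequality forces the normalized table to be a circle, hence $D$ to be an ellipse. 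To repair your argument you would either have to prove the vanishing of the harmonic terms (i.e.\ reprove Proposition \ref{P1}) or replace $\int a\,d\mu_{\mathrm{inv}}$ by a quantity that is genuinely expressible through $L_{\mathrm{aff}}$ and $A$, which the double integral of $(\gamma'(s_1)\wedge\gamma'(s_2))^2$ is not.
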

\noindent The proof is a (non-trivial) adaptation to the symplectic billiard case of Bialy's arguments. As in his case, the center point is showing this result.
\begin{theorem} \label{luca}
The only symplectic billiards without conjugate points are elliptic billiards.
\end{theorem}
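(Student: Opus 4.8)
The plan is to transplant Bialy's Hopf-type argument for Birkhoff billiards into \emph{affine} differential geometry: because the symplectic billiard map is equivariant under the affine group, every ingredient of the argument (generating function, Jacobi equation, invariant measure) is an equiaffine invariant, and the extremal case will be an ellipse---the affine orbit of the circle---rather than a circle. Concretely, parametrize the boundary by $\gamma$ and recall that symplectic billiard orbits are the critical configurations of the action $\sum_n\omega\big(\gamma(s_n),\gamma(s_{n+1})\big)$, with $\omega(a,b)=\det(a,b)$ the standard area form; the Euler--Lagrange equation is then exactly the parallelism relation $\gamma(s_{n-1})-\gamma(s_{n+1})\parallel\gamma'(s_n)$ defining the map. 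Taking the second variation produces the tridiagonal Jacobi equation
\[
c_{n-1}\,\xi_{n-1}+b_n\,\xi_n+c_n\,\xi_{n+1}=0,\qquad c_n=\det\!\big(\gamma'(s_n),\gamma'(s_{n+1})\big),\quad b_n=\det\!\big(\gamma(s_{n-1})-\gamma(s_{n+1}),\,\gamma''(s_n)\big),
\]
for an infinitesimal variation $\xi_n$ of an orbit. The point to stress is that all coefficients are determinants, hence invariant under the equiaffine group; this is the infinitesimal form of the affine equivariance.

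Next I would normalize $\gamma$ by its affine arc length, so that $\det(\gamma',\gamma'')\equiv1$ and the affine curvature $k$ enters through $\gamma'''+k\gamma'=0$. The strict convexity and positive-curvature hypothesis makes the map a genuine positive twist, giving $c_n>0$; note moreover that $c_n$ is precisely the density of the invariant area form $c\,ds\wedge ds'$ on the phase annulus, so the invariant measure is built from the same coefficient. ``No conjugate points'' means the Jacobi equation has no nontrivial solution vanishing at two indices, equivalently that the tridiagonal operator is nonnegative on finitely supported variations; by Hopf's device (limits of solutions vanishing far away) this yields a globally positive stable solution $\xi_n>0$ along every orbit. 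Introducing the Riccati variable $p_n=c_n\,\xi_{n+1}/\xi_n>0$ turns the Jacobi equation into $p_n+c_{n-1}^2/p_{n-1}=-b_n$, a scalar relation between consecutive states in which the affine geometry of $\gamma$ is encoded.

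I would then integrate this Riccati relation over the phase space against the invariant measure. By invariance the ``dynamical'' terms reindex into one another, and the arithmetic--geometric mean inequality gives
\[
\int_M (-b)\,d\mu=\int_M\Big(p+\frac{c^2}{p}\Big)\,d\mu\;\ge\;2\int_M c\,d\mu,
\]
with equality if and only if $p\equiv c$, i.e. $\xi$ is constant along every orbit. Feeding $\xi_n\equiv\mathrm{const}$ back into the Jacobi equation yields the pointwise relation $c_{n-1}+b_n+c_n=0$ for \emph{all} admissible orbit triples, which---after reducing by affine equivariance to a single model, where one checks it directly---is exactly the condition that $\gamma$ have constant affine curvature, i.e. that $\gamma$ be a conic.

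It remains to force the equality case, and this is where the affine isoperimetric inequality enters and where the main difficulty lies. The task is to evaluate the two affine functionals $\int_M(-b)\,d\mu$ and $\int_M c\,d\mu$ as geometric quantities of $\gamma$ and to show, via the affine isoperimetric inequality (sharp precisely for ellipses), that the reverse inequality $\int_M(-b)\,d\mu\le 2\int_M c\,d\mu$ also holds. Combined with the Hopf bound this pins the configuration to equality, hence $\gamma$ has constant affine curvature and, being a bounded strictly convex curve, is an ellipse. The hardest part is precisely this matching: converting the phase-space integrals into classical affine-geometric functionals of $\gamma$ and identifying their extremal ratio with the affine isoperimetric equality case. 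A secondary technical obstacle is the discreteness of the symplectic billiard---the gaps $s_{n+1}-s_n$ are finite, not infinitesimal---so one must carefully control the finite-difference relation between the coefficients $c_n,b_n$ and the differential invariants $1$ and $k$ before the continuous affine inequalities can be brought to bear.
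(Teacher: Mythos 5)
Your first half is sound and coincides with the paper's: the Riccati/AM--GM form of Hopf's argument is exactly how the key inequality $\int_{\mathcal P}\bigl(L_{11}+2L_{12}+L_{22}\bigr)\,d\Omega\le 0$ (Proposition \ref{DISUG}) is obtained, and in your notation this is $\int(-b)\,d\mu\ge 2\int c\,d\mu$. The gap is in the second half, which you yourself flag as ``the hardest part'' and then do not carry out: you assert that the reverse inequality $\int(-b)\,d\mu\le 2\int c\,d\mu$ should follow from the affine isoperimetric inequality, but the two phase-space integrals do not convert into affine-isoperimetric functionals. When one actually computes them (Lemma \ref{AREA} and Lemma \ref{7}), $\int(-b)\,d\mu$ is $A(D)$ times a parametrization-dependent weighted length $\int(p''+p)^2h^2\,d\alpha$, while $2\int c\,d\mu$ equals the square of $\int(p''+p)^2h\,d\alpha$ \emph{minus} the two nonnegative quantities $\bigl(\int(p''+p)^2h\cos(2\alpha)\,d\alpha\bigr)^2$ and $\bigl(\int(p''+p)^2h\sin(2\alpha)\,d\alpha\bigr)^2$. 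These corrections enter with the wrong sign: they weaken the Hopf inequality, so for a generic linear representative of $D$ and a generic parametrization no isoperimetric-type inequality (Euclidean or affine) can be played against it to force equality. Note also that these individual integrals are not invariants of $D$ under area-preserving linear maps, so ``every coefficient is a determinant'' does not rescue the matching.

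The missing idea is precisely the paper's Proposition \ref{P1}: a topological (winding) argument producing a rotation composed with a unimodular diagonal map $\varphi_{a,\sigma}$ for which the second Fourier coefficients of the support function of $\varphi_{a,\sigma}(D)$ vanish. With that normalization and the \emph{Euclidean} arc-length parametrization the correction terms disappear, the Hopf inequality becomes $l(\partial D_{a,\sigma})^2\le 4\pi A(D_{a,\sigma})$, and the Euclidean isoperimetric inequality forces $\varphi_{a,\sigma}(D)$ to be a circle, hence $D$ an ellipse. Your equality-case discussion ($\xi$ constant $\Rightarrow c_{n-1}+b_n+c_n=0\Rightarrow$ constant affine curvature) is likewise only heuristic: it presupposes that equality has already been established, which is exactly what the normalization is needed for, and the pointwise relation $c_{n-1}+b_n+c_n=0$ for all admissible triples is never verified to characterize conics. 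Until you either prove an affine-invariant identity replacing Lemma \ref{7} or supply a normalization device equivalent to Proposition \ref{P1}, the argument does not close.
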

\noindent Differently from Birkhoff case, the above theorem is obtained by using a precise  coordinate system chosen among a class of affine transformations. 
The key point of the proof is Proposition \ref{P1} where we show that in this new coordinates the second Fourier terms of the boundary are $0$. 
The conclusion follows by using the estimate obtained through the technique of \cite{B} together with the planar isoperimetric inequality. Finally, by nowadays ``standard'' variational arguments for twist maps (as in \cite{B}[Page 154]), we have that Theorem \ref{bialy} follows from Theorem \ref{luca}. \\
\indent We remark that changing coordinates as we did is a natural technique to obtain refined isoperimetric estimates. It came to our attention that in the paper \cite{WE} the authors perform a family of affine transformations in order to cancel the first Fourier terms of the boundary. However, our approach is completely different essentially for two reasons. First, the vanishing of the second Fourier terms is aimed to prove the converse of the planar isoperimetric inequality; second, our argument is more topological rather than analytical.  \\  
\indent The paper is organized as follows. In Section \ref{Se2} we introduce symplectic billiards and recall their main properties. In Section \ref{BiIn} we present an inequality for billiards without conjugate points --see essentially \cite{B}[Theorem 1]-- which is the main ingredient in the proof of Theorem \ref{luca}. Section \ref{Se3} is devoted to technical facts on integrals involving the area form. 
In Section \ref{principale} we show how to choose the right coordinate system and the parametrization for $\partial D$ in order to get the strongest estimate which, together with the planar isoperimetric inequality, will yield the conclusion. In this section, which represents the core of the paper, inequality (\ref{eccola2}) is treated in a completely different way with respect to the Birkhoff case in order to prove Theorem \ref{luca}.
\begin{figure}
    \centering
    \includegraphics{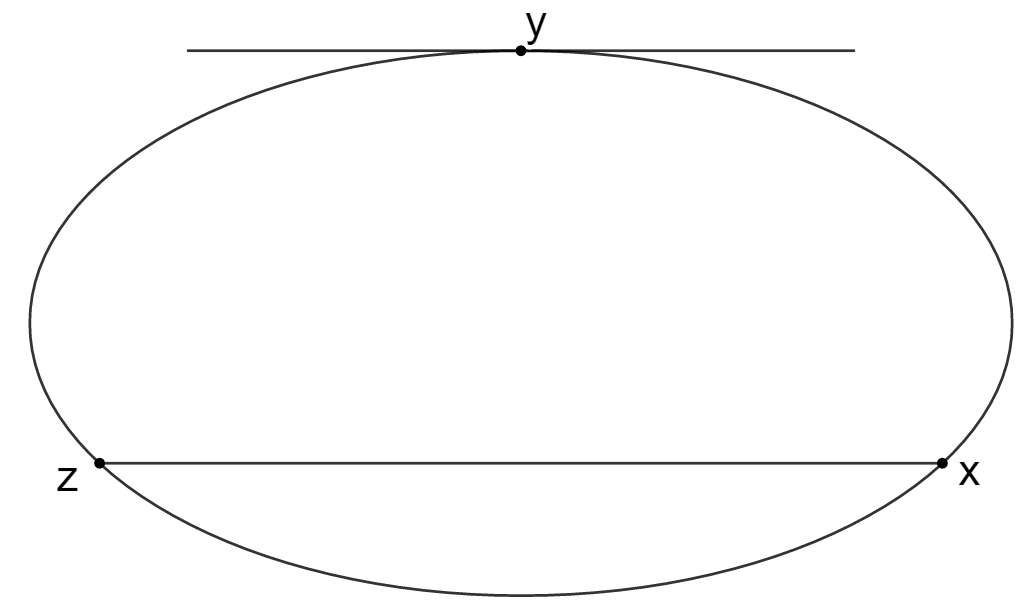}
    \caption{The symplectic billiard map reflection.}
    \label{Figuraccia}
\end{figure}

\section{The dynamical system} \label{Se2}
\noindent Let $D$ be a strictly-convex domain in $(\R^2,\omega)$ with smooth boundary $\partial D$. Assume that $D$ contains the origin and  that the perimeter of $\partial D$ is normalized to one. Fixed the positive counter-clockwise orientation, let $\gamma :\mathbb{T} \rightarrow \p D$ be a smooth parametrization of $\p D$, where $\mathbb{T} = \mathbb{R}/ \ell \, \mathbb{Z} = [0,\ell] / \sim$ identifying $0 \sim \ell$. Since $D$ is  strictly-convex, for every point $\gamma(t) \in \p D$ there exists a unique point $\gamma(t^*)$ such that 
$$\omega(\gamma'(t),\gamma'(t^*)) = 0.$$  
(In the above formula, $\gamma'(t)$ and $\gamma'(t^*)$ denote the tangent vectors at $\p D$ in $\gamma(t)$ and $\gamma(t^*)$ respectively). We refer to
\begin{equation} \label{PS}
\mathcal{P} := \{ (\gamma(t), \gamma(s)) \in \p D \times \p D: \ \gamma(t) < \gamma(s) < \gamma(t^*)\}
\end{equation}
as the (open, positive) phase-space and we define the symplectic billiard map as follows (see \cite{AT} [Page 5]).
$$\Phi: \mathcal{P} \rightarrow \mathcal{P}, \qquad (\gamma(t_1),\gamma(t_2) \mapsto (\gamma(t_2),\gamma (t_3))$$
where $\gamma(t_3)$ is the unique point satisfying 
$$\omega( \gamma^{\prime} (t_2),\gamma(t_3)-\gamma(t_1))=0.$$ 
Here are some properties of the symplectic billiard map (we refer to \cite{AT} [Section 2] for exhaustive details). \\
~\newline
\noindent 
-- The symplectic billiard map commutes with affine transformations of the plane, since they preserve tangent directions. Clearly, the symplectic billiard and the Birkhoff billiard maps coincides for circles, which result totally integrable. By the affine equivariance of the symplectic billiard map and differently to the Birkhoff case, also ellipses are totally integrable symplectic billiards. In view of our theorem, these are the unique cases. \\
~\newline
-- $\Phi$ is continuous and can be continuously extended to $\bar{\mathcal{P}}$ so that 
$$\Phi(\gamma(t),\gamma(t)) = (\gamma(t),\gamma(t)) \qquad \text{and} \qquad \Phi(\gamma(t),\gamma(t^*)) = (\gamma(t^*),\gamma(t)).$$
\noindent -- The (standard) area form
$$\omega: \mathcal{P} \to \mathbb{R}, \qquad (\gamma(t_1),\gamma(t_2)) \mapsto \omega(\gamma(t_1),\gamma(t_2))$$
is a generating function for $\Phi$, that is
\begin{equation} \label{GF}
\Phi(\gamma(t_1),\gamma(t_2)) = (\gamma(t_2),\gamma(t_3)) \qquad \Longleftrightarrow \qquad \omega(\gamma(t_1),\gamma'(t_2)) + \omega(\gamma'(t_2),\gamma(t_3)) = 0.
\end{equation}
-- Let introduce new variables
$$s_1 = \omega(\gamma'(t_1), \gamma(t_2)), \qquad s_2 = \omega(\gamma(t_1), \gamma'(t_2))$$
and the $2$-form 
\begin{equation} \label{2 forma}
\Omega = \omega(\gamma'(t_1),\gamma'(t_2)) dt_1dt_2 = \frac{\partial s_1}{\partial t_2}(t_1,t_2) dt_1dt_2.
\end{equation}
Then $(t,s)$ are coordinates on $\mathcal{P}$ and $\Omega$ is an area form. Moreover, the symplectic billiard map becomes a (positive) twist map ($\frac{\partial t_2}{\partial s_1} > 0$)  which preserves $\Omega$. 

\section{An inequality by M. Bialy} \label{BiIn}
\noindent From now on, $L(t_1,t_2) := \omega(\gamma(t_1),\gamma(t_2))$ denotes the generating function for $\Phi$ and $L_{ij}$ (for $i,j=1,2$) are the usual partial derivatives:
$$L_{11}(t_1,t_2) = \omega(\gamma''(t_1),\gamma(t_2)), \quad L_{22}(t_1,t_2) = \omega(\gamma(t_1),\gamma''(t_2)) \quad \text{and} \quad  L_{12}(t_1,t_2) = \omega(\gamma'(t_1),\gamma'(t_2)).$$
Therefore,
\begin{equation} \label{estimate}
|L_{11}|, |L_{22}|, L_{12} \le K
\end{equation}
where $K$ is a constant depending on the $C^2$-norm of $\gamma$. \\
\noindent Let $\{t_n\}_{n \in \mathbb{Z}}$ be a symplectic billiard configuration. Then, by (\ref{GF}), $\{t_n\}_{n \in \mathbb{Z}}$ satisfies
$$L_1(t_n,t_{n+1}) +L_2(t_{n-1},t_n) = 0 \qquad \forall n \in \mathbb{Z}.$$
\noindent In the sequel, we recall the definitions of (discrete) Jacobi field and conjugate point introduced by M. Bialy, see \cite{B} [Definitions 1 and 2].  Using the parametrization of $\p D$ we identify the tangent bundle $T(\p D)$ with $\p D \times \R$.
\begin{definition} $(i)$ A sequence $\{\xi_n\}_{n \in \mathbb{Z}}$, $\xi_n \in T_{\gamma(t_n)}\partial D$ is called a Jacobi field along $\{t_n\}_{n \in \mathbb{Z}}$ \\
if it satisfies:
$$ L_{12}(t_{n-1}, t_n) \xi_{n-1} + [L_{22}(t_{n-1}, t_n) + L_{11}(t_{n}, t_{n+1})] \xi_n + L_{12}(t_{n}, t_{n+1}) \xi_{n+1} = 0
$$
for all $n \in \mathbb{Z}$. \\
~\newline
\noindent 
$(ii)$ Let $M < N$. Two points $t_M$, $t_N$ of a symplectic billiard configuration are called conjugate if there is a non-zero Jacobi field vanishing at $t_M$ and $t_N$. \\
~\newline
$(iii)$ A symplectic billiard is without conjugate points if every billiard configuration has no conjugate points.
\end{definition}
\noindent Suppose now that the symplectic billiard is without conjugate points. Since the corresponding map is (positive) twist and preserves an area form, the arguments of \cite{B}[Section 3] can be rephrased step by step in order to prove the next proposition.
\begin{proposition} \label{DISUG}
If the symplectic billiard has no conjugate points, then the following inequality holds:
\begin{equation} \label{eccola2}
\int_{\mathcal{P}} L_{11}(t_1,t_2) + 2L_{12}(t_1,t_2) + L_{22}(t_1,t_2)\, d\Omega \le 0.
\end{equation}
\end{proposition}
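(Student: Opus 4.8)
The plan is to carry out, in the symplectic setting, the discrete Hopf-type argument of Bialy: exploit the absence of conjugate points to produce a globally positive Jacobi field along each configuration, convert the second-variation quantity $L_{11}+2L_{12}+L_{22}$ into a manifestly nonpositive sum via a discrete Picone identity, and then pass from this orbit-wise inequality to the integral inequality (\ref{eccola2}) using the $\Phi$-invariance of $\Omega$ together with Birkhoff averaging.

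First I would note that the Jacobi equation is a symmetric three-term recurrence whose off-diagonal coefficient is $L_{12}(t_n,t_{n+1})=\omega(\gamma'(t_n),\gamma'(t_{n+1}))>0$ on the positive phase-space. For such a recurrence ``no conjugate points'' is precisely discrete disconjugacy, which is equivalent to the existence of a positive solution. Concretely, for each configuration $\{t_n\}$ I would build a positive Jacobi field $\xi_n>0$ as a limit of recessive solutions vanishing at $\pm N$: the no-conjugate-points hypothesis forbids a second zero, so these solutions stay positive, while the uniform bounds (\ref{estimate}) and $L_{12}>0$ confine the successive ratios $\xi_{n+1}/\xi_n$ to a fixed compact subinterval of $(0,\infty)$, so that the limit exists and $\xi_n$ is bounded and bounded away from $0$.

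Next I would record the discrete Picone identity. Writing the quadratic form of the Jacobi operator as
$$Q(\phi)=\sum_n \big[L_{22}(t_{n-1},t_n)+L_{11}(t_n,t_{n+1})\big]\phi_n^2+2\sum_n L_{12}(t_n,t_{n+1})\,\phi_n\phi_{n+1},$$
and substituting $L_{22}(t_{n-1},t_n)+L_{11}(t_n,t_{n+1})=-\big(L_{12}(t_{n-1},t_n)\xi_{n-1}+L_{12}(t_n,t_{n+1})\xi_{n+1}\big)/\xi_n$ from the Jacobi equation, a summation by parts gives
$$Q(\phi)=-\sum_n L_{12}(t_n,t_{n+1})\,\xi_n\xi_{n+1}\Big(\tfrac{\phi_{n+1}}{\xi_{n+1}}-\tfrac{\phi_n}{\xi_n}\Big)^2\le 0,$$
the sign coming from $L_{12}>0$ and $\xi>0$. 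The decisive bookkeeping is that testing against the constant field $\phi\equiv\mathbf 1$ reproduces the integrand: regrouping the vertex sums $\sum_n(J\mathbf 1)_n$ by edges, the edge $(t_k,t_{k+1})$ collects exactly $L_{11}+2L_{12}+L_{22}$ evaluated there. Hence on any finite window
$$\sum_{n=-M}^{M}\big[L_{11}+2L_{12}+L_{22}\big](t_n,t_{n+1})=-\sum_{n=-M}^{M-1}L_{12}\,\xi_n\xi_{n+1}\Big(\tfrac{1}{\xi_{n+1}}-\tfrac{1}{\xi_n}\Big)^2+R_M\le R_M,$$
where the boundary remainder $R_M$ is bounded independently of $M$ by (\ref{estimate}) and the uniform control on $\xi$.

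Finally I would convert this into the integral statement. Since $\Omega$ is a finite $\Phi$-invariant area form, I set $f(t_1,t_2)=L_{11}+2L_{12}+L_{22}$ and apply Birkhoff's theorem to $\frac{1}{2M+1}\sum_{n=-M}^{M}f\circ\Phi^n$, whose integral equals $\int_{\mathcal{P}}f\,d\Omega$. For almost every orbit the partial sum is the left-hand side above, hence $\le R_M$; dividing by $2M+1$ and letting $M\to\infty$ annihilates the $O(1)$ remainder, so the limit is $\le 0$ orbit-wise, and integrating yields (\ref{eccola2}). I expect the main obstacle to be the first step, namely constructing the global positive Jacobi field and correspondingly controlling $R_M$ uniformly in $M$: this is exactly where no-conjugate-points enters and where the work of \cite{B}[Section 3] lies, so the task is to check that his construction survives the sign structure $L_{12}>0$ of the symplectic twist map.
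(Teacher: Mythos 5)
Your proof is correct in substance but globalizes the orbit-wise estimate by a genuinely different route than the paper. The paper (following \cite{B}) packages the same positive-Jacobi-field information into the single function $F(\gamma(t_n),\gamma(t_{n+1}))=-L_{11}(t_n,t_{n+1})-L_{12}(t_n,t_{n+1})\nu_{n+1}/\nu_n$, proves the pointwise coboundary inequality $F\circ\Phi-F\ge L_{11}+2L_{12}+L_{22}$ (which is exactly the nonnegativity of your Picone square term, via $\nu_n/\nu_{n+1}+\nu_{n+1}/\nu_n\ge 2$), checks $F\in L_1(\mathcal{P},\Omega)$, and integrates using the invariance of $\Omega$. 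You instead sum the Picone identity over a finite window and invoke Birkhoff's ergodic theorem, letting the $O(1)$ boundary remainder die after division by $2M+1$. The two routes hinge on the same key bound, but yours has the small advantage that the Jacobi field (hence $F$) never needs to depend measurably on the orbit, since Birkhoff is applied only to the continuous bounded function $f=L_{11}+2L_{12}+L_{22}$ on the finite measure space $(\mathcal{P},\Omega)$. One claim in your first step is unjustified and in fact too strong: the ratios $\xi_{n+1}/\xi_n$ are \emph{not} confined to a fixed compact subinterval of $(0,\infty)$ uniformly over orbits, because $L_{12}(t_n,t_{n+1})=\omega(\gamma'(t_n),\gamma'(t_{n+1}))$ degenerates to $0$ near the boundary of $\mathcal{P}$ and the three-term recurrence then gives no uniform control of the bare ratio (nor is $\xi_n$ itself bounded away from $0$ and $\infty$ in general). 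What is uniformly bounded --- and is all you need to get $R_M=O(1)$ --- is the product $0<L_{12}(t_n,t_{n+1})\,\xi_{n+1}/\xi_n\le 2K$, which follows from dividing the Jacobi equation by $\xi_n$, using positivity of the remaining $L_{12}$-term, and (\ref{estimate}); this is precisely the boundedness of Bialy's $F$. With the justification of the remainder bound rerouted through that product, your argument closes.
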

\begin{proof} We only sketch the proof, we refer to \cite{B}[Section 3] for all details. First, by using the (positive) twist condition and the absence of conjugate points assumption, it is possible to construct a proportional, strictly positive Jacobi field $\{\nu_n\}_{n \in \mathbb{Z}}$ and a corresponding well-defined function on $\mathcal{P}$:
$$F(\gamma(t_n),\gamma(t_{n+1})) := -L_{11}(t_n,t_{n+1}) - L_{12}(t_n,t_{n+1}) \frac{\nu_{n+1}}{\nu_n}.$$
As a consequence of estimates (\ref{estimate}), $F \in L_1(\mathcal{P},\Omega)$. Then, by the (positive) twist condition again, it can be easily proved --exactly as in \cite{B}[Lemma 4]-- that the function $F: \mathcal{P} \to \mathbb{R}$ satisfies the following inequality:
$$F(\Phi(\gamma(t_1),\gamma(t_2)) - F(\gamma(t_1),\gamma(t_2)) \ge L_{11}(t_1,t_2) + 2L_{12}(t_1,t_2) + L_{22}(t_1,t_2).$$
Integrating both sides over $(\mathcal{P},\Omega)$ and using the fact that the measure $\Omega$ is invariant under $\Phi$, we obtain the desired inequality.
\end{proof}
\noindent The next section is devoted to study accurately the integral in the above proposition, that is (by (\ref{2 forma})):
\begin{equation} \label{eccola}
\int_{\mathcal{P}} [L_{11}(t_1,t_2) + 2L_{12}(t_1,t_2) + L_{22}(t_1,t_2)] L_{12}(t_1,t_2) \, dt_1 dt_2.
\end{equation}

\section{Some basic computations} \label{Se3}
\noindent We start this section by proving some straightforward facts, which will be useful in the sequel.  
\begin{lemma} For $L(t_1,t_2) = \omega(\gamma(t_1),\gamma(t_2))$, the next equality holds:
\[
\begin{split}&2 \int_{\mathcal{P}} [L_{11}(t_1,t_2) + 2L_{12}(t_1,t_2) + L_{22}(t_1,t_2)] L_{12}(t_1,t_2) \, dt_1 dt_2  \\
&= \iint_{[0,\ell]^2} [L_{11}(t_1,t_2) + 2L_{12}(t_1,t_2) + L_{22}(t_1,t_2)] L_{12}(t_1,t_2) \, dt_1 dt_2.
\end{split}
\]
\end{lemma}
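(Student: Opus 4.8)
The plan is to exploit the antisymmetry of the area form $\omega$ together with the involution $t \mapsto t^*$. Abbreviate the integrand by
$$G(t_1,t_2) := [L_{11}(t_1,t_2) + 2L_{12}(t_1,t_2) + L_{22}(t_1,t_2)]\, L_{12}(t_1,t_2).$$
Since the right-hand side of the claimed identity is the integral of $G$ over the whole square $[0,\ell]^2$, while the left-hand side is twice its integral over $\mathcal{P}$, it suffices to establish two facts: that $G$ is invariant under the coordinate swap $\sigma(t_1,t_2) = (t_2,t_1)$, and that $\sigma$ maps $\mathcal{P}$ onto the complementary region of the square, up to a set of measure zero. Granting these, and noting that $\sigma$ preserves $dt_1\,dt_2$, one gets
$$\iint_{[0,\ell]^2} G \, dt_1\,dt_2 = \int_{\mathcal{P}} G \, dt_1\,dt_2 + \int_{\sigma(\mathcal{P})} G \, dt_1\,dt_2 = 2\int_{\mathcal{P}} G \, dt_1\,dt_2,$$
which is exactly the assertion.

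The symmetry of $G$ is a direct computation from the antisymmetry $\omega(a,b) = -\omega(b,a)$. Swapping $t_1$ and $t_2$ gives $L_{11}(t_2,t_1) = \omega(\gamma''(t_2),\gamma(t_1)) = -L_{22}(t_1,t_2)$, and in the same way $L_{22}(t_2,t_1) = -L_{11}(t_1,t_2)$ and $L_{12}(t_2,t_1) = -L_{12}(t_1,t_2)$. Hence the bracket $L_{11} + 2L_{12} + L_{22}$ changes sign under $\sigma$, and so does the factor $L_{12}$; their product $G$ is therefore left unchanged. This is the only analytic input required.

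The geometric heart of the argument — and the step I expect to demand the most care — is identifying $\sigma(\mathcal{P})$ with the complement of $\mathcal{P}$ in the square. Here I would use that $t \mapsto t^*$ is an orientation-preserving, fixed-point-free involution of $\mathbb{T}$: this is the content of the defining relation $\omega(\gamma'(t),\gamma'(t^*)) = 0$ on a strictly convex curve, which forces $\gamma'(t^*)$ to be antiparallel to $\gamma'(t)$, and hence $t^* \neq t$ with $(t^*)^* = t$. For such an involution, if $t_2$ lies on the counter-clockwise arc from $t_1$ to $t_1^*$, then applying $*$ shows the four points occur in cyclic order $t_1, t_2, t_1^*, t_2^*$, so that $t_1$ lies on the counter-clockwise arc from $t_2^*$ to $t_2$. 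Equivalently, $(t_1,t_2) \in \mathcal{P}$ implies $(t_2,t_1)$ lies in the complementary region $\{\gamma(t^*) < \gamma(s) < \gamma(t)\}$. Since the diagonal $\{t_1 = t_2\}$ and the curve $\{t_2 = t_1^*\}$ are null sets partitioning $[0,\ell]^2$ into $\mathcal{P}$ and this complementary region, the map $\sigma$ interchanges the two pieces bijectively up to measure zero, and the displayed identity follows.
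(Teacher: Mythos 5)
Your proposal is correct and follows essentially the same route as the paper: both exploit the antisymmetry relations $L_{11}(t_2,t_1)=-L_{22}(t_1,t_2)$, $L_{12}(t_2,t_1)=-L_{12}(t_1,t_2)$ to show the integrand is invariant under the swap $\sigma$, and then identify $[0,\ell]^2$ (up to null sets) with the disjoint union of $\mathcal{P}$ and its image under $\sigma$. The only difference is that you spell out, via the fixed-point-free involution $t\mapsto t^*$, why $\sigma(\mathcal{P})$ is the complementary region — a point the paper dispatches with a brief appeal to $\ell$-periodicity.
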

\begin{proof} Let consider the phase-space $\mathcal{P}$ --see (\ref{PS})-- and define
$$\widetilde{\mathcal{P}} := \{ (\gamma(s), \gamma(t)): \  (\gamma(t), \gamma(s)) \in \mathcal{P}\}.$$
Since $L_{11}(t_1,t_2) = -L_{22}(t_2,t_1)$ and $L_{12}(t_1,t_2) = -L_{12}(t_2,t_1)$, we have that
\[
\begin{split}&\int_{\mathcal{P}} [L_{11}(t_1,t_2) + 2L_{12}(t_1,t_2) + L_{22}(t_1,t_2)] L_{12}(t_1,t_2) \, dt_1 dt_2 \\ 
&= \int_{\mathcal{P}} [L_{11}(t_2,t_1) + 2L_{12}(t_2,t_1) + L_{22}(t_2,t_1)] L_{12}(t_2,t_1) \, dt_1 dt_2 \\
&= \int_{\widetilde{\mathcal{P}}} [L_{11}(t_1,t_2) + 2L_{12}(t_1,t_2) + L_{22}(t_1,t_2)] L_{12}(t_1,t_2) \, dt_1 dt_2
\end{split}
\]
By the $\ell$-periodicity of the generating function $L(t_1,t_2)$ with respect to both arguments, we obtain the desired equality. 
\end{proof}
 
\begin{lemma} \label{AREA} For $L(t_1,t_2) = \omega(\gamma(t_1),\gamma(t_2))$, the next equality holds:
\begin{equation} \label{2}
\iint_{[0,\ell]^2}\left[L_{11}(t_1,t_2)+L_{22}(t_1,t_2)\right] L_{12}(t_1,t_2) \, dt_1 dt_2= - 2A(D)\int_0^{\ell} \omega(\gamma''(t), \gamma'(t))\, dt,
\end{equation}
where $A(D)$ denotes the area of the billiard table $D$. 
\end{lemma}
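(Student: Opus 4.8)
The plan is to turn this genuinely two-dimensional integral into a sum of products of one-dimensional integrals by passing to coordinates. Writing $\gamma=(x,y)$ and $\omega(u,v)=u_1v_2-u_2v_1$, the three second derivatives of the generating function read
\[
L_{11}(t_1,t_2)=x''(t_1)y(t_2)-y''(t_1)x(t_2),\quad L_{22}(t_1,t_2)=x(t_1)y''(t_2)-y(t_1)x''(t_2),
\]
\[
L_{12}(t_1,t_2)=x'(t_1)y'(t_2)-y'(t_1)x'(t_2).
\]
The key structural point is that every term of $L_{11}L_{12}$ and of $L_{22}L_{12}$ splits as a function of $t_1$ times a function of $t_2$; since the domain is the full square $[0,\ell]^2$, Fubini factorizes each such term into a product of two integrals over $[0,\ell]$.

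Before expanding, I would halve the work by noting, exactly as in the previous lemma, that $L_{11}(t_1,t_2)=-L_{22}(t_2,t_1)$ and $L_{12}(t_1,t_2)=-L_{12}(t_2,t_1)$; renaming the dummy variables then gives $\iint_{[0,\ell]^2}L_{11}L_{12}=\iint_{[0,\ell]^2}L_{22}L_{12}$, so it suffices to compute $\iint_{[0,\ell]^2}L_{11}L_{12}$ and double it. Expanding this product, the contributions carrying $\int_0^\ell x'x''\,dt=\tfrac12[(x')^2]_0^\ell$ or $\int_0^\ell y'y''\,dt$ vanish by the $\ell$-periodicity of $\gamma$. The surviving terms involve only $\int_0^\ell x''y'\,dt$ and $\int_0^\ell y''x'\,dt$ in the $t_1$-variable and $\int_0^\ell x\,y'\,dt$ and $\int_0^\ell y\,x'\,dt$ in the $t_2$-variable. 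Integrating by parts and dropping the periodic boundary terms yields $\int_0^\ell x''y'\,dt=-\int_0^\ell x'y''\,dt$ and $\int_0^\ell x\,y'\,dt=-\int_0^\ell x'y\,dt$, which reduces the four surviving integrals to two independent ones.

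It then remains to recognize these two quantities geometrically. The shoelace (Green) formula gives $A(D)=\tfrac12\int_0^\ell\omega(\gamma,\gamma')\,dt=\tfrac12\int_0^\ell(xy'-yx')\,dt$, so the $t_2$-integrals reconstruct the area of $D$, while the $t_1$-combination $\int_0^\ell(x''y'-y''x')\,dt=\int_0^\ell\omega(\gamma'',\gamma')\,dt$ is precisely the factor on the right-hand side. Substituting these back and using the doubling from the symmetry step assembles the claimed identity.

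The only delicate point --and the one I would check most carefully-- is the sign bookkeeping: tracking the orientation convention entering the shoelace formula together with the boundary terms discarded in each integration by parts, so that the correct constant (with its sign) multiplies $A(D)\int_0^\ell\omega(\gamma'',\gamma')\,dt$. Everything else is a mechanical separation of variables and two integrations by parts.
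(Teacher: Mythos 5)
Your proposal is correct and follows essentially the same route as the paper: expand $L_{11}L_{12}$ in coordinates, use Fubini to factor each term, discard the contributions containing the exact derivatives $\int_0^\ell x'x''\,dt$ and $\int_0^\ell y'y''\,dt$ by periodicity, integrate by parts once, and recognize the two remaining factors as $\int_0^\ell\omega(\gamma'',\gamma')\,dt$ and the shoelace area. The only (harmless) difference is that you dispatch the $L_{22}$ term via the symmetry $L_{11}(t_1,t_2)=-L_{22}(t_2,t_1)$, $L_{12}(t_1,t_2)=-L_{12}(t_2,t_1)$, whereas the paper simply repeats the computation; your instinct to double-check the overall sign at the end is well placed.
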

\begin{proof} The proof is a simple integration by parts argument. In fact, we have
\[
\begin{split}
&\iint_{[0,\ell]^2}L_{11}(t_1,t_2)L_{12}(t_1,t_2) \, dt_1 dt_2 =\iint_{[0,\ell]^2} \omega(\gamma''(t_1),\gamma(t_2)) \omega(\gamma'(t_1), \gamma'(t_2)) \, dt_1 dt_2 \\
&=\iint_{[0,\ell]^2} \left( \gamma_1''(t_1) \gamma_2(t_2) - \gamma_2''(t_1) \gamma_1(t_2) \right) \left( \gamma'_1(t_1) \gamma'_2(t_2)-\gamma'_2(t_1) \gamma'_1(t_2) \right) \, dt_1dt_2 \\
&=\iint_{[0,\ell]^2} \left[-\gamma_1''(t_1) \gamma_2(t_2)\gamma'_2(t_1) \gamma'_1(t_2)-\gamma_2''(t_1) \gamma_1(t_2)\gamma'_1(t_1) \gamma'_2(t_2)\right] \, dt_1dt_2 \\
&=\iint_{[0,\ell]^2} \left[\gamma_1''(t_1) \gamma'_2(t_2)\gamma'_2(t_1) \gamma_1(t_2)-\gamma_2''(t_1) \gamma_1(t_2)\gamma'_1(t_1) \gamma'_2(t_2)\right] \, dt_1dt_2 \\ & = -A(D)\int_0^{\ell} \omega(\gamma''(t), \gamma'(t)) \, dt.
\end{split}
\]
By repeating the same computations on the term $L_{22}$ we have the conclusion.
\end{proof}
\noindent
We now parametrize $\p D$ by the direction of its tangent line. Let denote by 
$$\e_\alpha=(-\sin(\alpha),\cos(\alpha))$$ 
the unit vector which forms an angle $\alpha \in [0,2\pi]$ with respect to the fixed direction $(0,1)$. Since $D$ is strictly convex and contains the origin, for every $\alpha$ there exists a unique point $\gamma(t_{\alpha}) \in \p D$  such that $\gamma'(t_\alpha)=\|\gamma'(t_\alpha)\|\e_{\alpha} $. Let $p:[0,2\pi]\rightarrow \R_+$ 
be the corresponding support function, that is the distance from the origin of the tangent line to $\p D$ at $\gamma(t_\alpha)$. It is easy to see that the following relation holds:
\begin{equation} \label{1}
    \gamma(t_\alpha)=p' (\alpha)\e_\alpha - p(\alpha) J\e_\alpha 
\end{equation}
where $J$ is the rotation of angle $\frac{\pi}{2}$ in the positive verse. In the sequel, for the sake of simplicity, we write $\gamma (\alpha)$ instead of $\gamma(t_\alpha)$. With this parametrization, the perimeter length of $\partial D$ and the area of $D$ are given respectively by the integrals:
$$\int^{2 \pi}_0 p(\alpha) \, d\alpha = \int^{2 \pi}_0 (p''(\alpha) + p(\alpha)) \, d\alpha \quad \text{and} \quad \frac{1}{2} \int_0^{2 \pi} [p(\alpha) + p''(\alpha)] p(\alpha) \, d\alpha.$$
see for example \cite{Fla}[Section 2]. We note that, as a consequence of the positive curvature assumption of $\partial D$, it holds that $$(p''(\alpha) + p(\alpha))>0.$$
Therefore, the twist condition is also satisfied for $L(\alpha_1,\alpha_2) = \omega (\gamma(\alpha_1),\gamma(\alpha_2))$, indeed:
$$L_{12}(\alpha_1,\alpha_2) = (p''(\alpha_1) + p(\alpha_1))(p''(\alpha_2) + p(\alpha_2)) \sin(\alpha_2-\alpha_1) > 0$$
on $\mathcal{P} = \{(\alpha_1,\alpha_2): \ 0 < \alpha_2 - \alpha_1 < \pi \}$.
\begin{lemma}
    For $L(\alpha_1,\alpha_2) = \omega (\gamma(\alpha_1),\gamma(\alpha_2))$, the next equalities hold:
 \begin{equation} \label{tre}
 \iint_{[0,2\pi]^2} [L_{11}(\alpha_1,\alpha_2) + L_{22}(\alpha_1,\alpha_2)] L_{12}(\alpha_1,\alpha_2) \, d\alpha_1d\alpha_2 = -2A(D) \int_0^{2\pi} (p'' + p)^2 \, d\alpha
 \end{equation}
 and
    \begin{equation}\label{3}
    \begin{split}
        &2\iint_{[0,2\pi]^2} L_{12}^2(\alpha_1,\alpha_2)\, d\alpha_1d\alpha_2 \\
        &=\left( \int^{2\pi}_0 (p''+p)^2\, d\alpha \right)^2-\left( \int^{2\pi}_0 (p''+p)^2\cos(2\alpha)\, d\alpha \right)^2 -\left( \int^{2\pi}_0 (p''+p)^2\sin(2\alpha)\, d\alpha \right)^2.
    \end{split}
    \end{equation}
\end{lemma}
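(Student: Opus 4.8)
The plan is to reduce both identities to the single positive scalar $p''+p$ (the radius of curvature) via the support-function representation (\ref{1}), after which (\ref{tre}) becomes a restatement of Lemma \ref{AREA} in the new parameter and (\ref{3}) becomes an exercise in the addition formulae. First I would differentiate (\ref{1}): using $\e_\alpha'=J\e_\alpha$ and $(J\e_\alpha)'=-\e_\alpha$, the cross terms cancel and one obtains the clean expressions
$$\gamma'(\alpha)=(p''(\alpha)+p(\alpha))\,\e_\alpha,\qquad \gamma''(\alpha)=(p'''(\alpha)+p'(\alpha))\,\e_\alpha+(p''(\alpha)+p(\alpha))\,J\e_\alpha.$$
These are the only geometric inputs needed; note that the factor $p''+p$ appearing here is the same one already recorded in the twist term $L_{12}$.

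For (\ref{tre}) I would observe that its left-hand side is exactly the quantity treated in Lemma \ref{AREA}, now written in the $2\pi$-periodic parameter $\alpha$, so the whole content is the evaluation of $\int_0^{2\pi}\omega(\gamma''(\alpha),\gamma'(\alpha))\,d\alpha$. Since $\omega(\e_\alpha,\e_\alpha)=0$ and $\omega(J\e_\alpha,\e_\alpha)=-1$, the $\e_\alpha$-component of $\gamma''$ contributes nothing and
$$\omega(\gamma''(\alpha),\gamma'(\alpha))=(p''(\alpha)+p(\alpha))^2\,\omega(J\e_\alpha,\e_\alpha)=-(p''(\alpha)+p(\alpha))^2.$$
Feeding this value into the identity of Lemma \ref{AREA} produces the right-hand side of (\ref{tre}).

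For (\ref{3}) I would start from the explicit twist term $L_{12}(\alpha_1,\alpha_2)=(p''(\alpha_1)+p(\alpha_1))(p''(\alpha_2)+p(\alpha_2))\sin(\alpha_2-\alpha_1)$ already computed in the text, square it, and rewrite the angular factor by $\sin^2(\alpha_2-\alpha_1)=\tfrac12\bigl(1-\cos 2(\alpha_2-\alpha_1)\bigr)$ followed by $\cos 2(\alpha_2-\alpha_1)=\cos 2\alpha_1\cos 2\alpha_2+\sin 2\alpha_1\sin 2\alpha_2$. The point of this rewriting is that every resulting summand splits as a function of $\alpha_1$ times a function of $\alpha_2$, so by Fubini the integral over $[0,2\pi]^2$ factorizes into products of the one-variable integrals $\int_0^{2\pi}(p''+p)^2\,d\alpha$, $\int_0^{2\pi}(p''+p)^2\cos 2\alpha\,d\alpha$ and $\int_0^{2\pi}(p''+p)^2\sin 2\alpha\,d\alpha$; collecting the three pieces gives precisely the right-hand side of (\ref{3}).

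Neither identity hides a real difficulty; the work is bookkeeping. The step I expect to be most error-prone --and would check most carefully-- is the sign in (\ref{tre}): it is governed entirely by the orientation baked into $\e_\alpha=(-\sin\alpha,\cos\alpha)$ and the positive rotation $J$, which fix $\omega(J\e_\alpha,\e_\alpha)=-1$ and hence the sign of the whole right-hand side. A convenient sanity check for both identities is the disc of radius $r$, where $p\equiv r$, $p''+p\equiv r$, and every integral is immediate; for (\ref{3}) the only remaining care is to keep the two variables genuinely separated through the addition formula so that the factorization into single integrals is legitimate.
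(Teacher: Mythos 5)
Your method is the same as the paper's for both identities: differentiate \eqref{1} to get $\gamma'(\alpha)=(p''+p)\e_\alpha$ and $\gamma''(\alpha)=(p'''+p')\e_\alpha+(p''+p)J\e_\alpha$, reduce \eqref{tre} to Lemma \ref{AREA}, and prove \eqref{3} by squaring $L_{12}$, expanding $\sin^2(\alpha_1-\alpha_2)$ through the addition formula, and applying Fubini. Your treatment of \eqref{3} is correct and essentially identical to the paper's.

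The one place you diverge is the sign in \eqref{tre}, and it is exactly the step you flagged as most error-prone. Your pointwise value $\omega(\gamma''(\alpha),\gamma'(\alpha))=(p''+p)^2\,\omega(J\e_\alpha,\e_\alpha)=-(p''+p)^2$ is the correct one (on the unit circle $\gamma''=-\gamma$ and $\omega(-\gamma,\gamma')=-1$), whereas the paper asserts $+(p''+p)^2$ at the corresponding point. But if you substitute your value into Lemma \ref{AREA} \emph{as stated}, you obtain $-2A(D)\int_0^{2\pi}\bigl(-(p''+p)^2\bigr)\,d\alpha=+2A(D)\int_0^{2\pi}(p''+p)^2\,d\alpha$, which is the negative of the right-hand side of \eqref{tre}; so your proof as written does not in fact ``produce the right-hand side of \eqref{tre}''. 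The reconciliation is that the last step in the proof of Lemma \ref{AREA} carries a compensating sign slip: for the counterclockwise parametrization one has $\int_0^\ell\gamma_1(t)\gamma_2'(t)\,dt=+A(D)$, so the double integral there equals $+A(D)\int_0^\ell\omega(\gamma''(t),\gamma'(t))\,dt$, and Lemma \ref{AREA} should read $\iint[L_{11}+L_{22}]L_{12}\,dt_1dt_2=+2A(D)\int_0^\ell\omega(\gamma'',\gamma')\,dt$. With that corrected form, your computation yields \eqref{tre} exactly, and your disc sanity check confirms the identity (both sides equal $-4\pi^2$ for the unit disc). In short: your local computation is right and the stated identity \eqref{tre} is true, but to close the argument you must either correct the sign in Lemma \ref{AREA} or adopt the paper's (incorrect) value of $\omega(\gamma'',\gamma')$; as your proposal stands, the two ingredients you cite are mutually inconsistent by a factor of $-1$.
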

\begin{proof}
By starting from \eqref{1}, it is immediate to obtain that
$$\gamma'(\alpha) = (p''(\alpha) + p(\alpha)) \bold{e}_{\alpha}$$
and
$$\gamma''(\alpha) = (p'''(\alpha) + p'(\alpha)) \bold{e}_{\alpha} + (p''(\alpha) + p(\alpha)) J\bold{e}_{\alpha}.$$
As a consequence, 
$$\omega(\gamma''(\alpha),\gamma'(\alpha)) = (p''(\alpha) + p(\alpha))^2$$
and equality (\ref{tre}) is an immediate consequence of Lemma \ref{AREA}. \\
In order to prove equality (\ref{3}), we observe that the integrand 
$$ L_{12}^2(\alpha_1,\alpha_2)=(p''(\alpha_1)+p(\alpha_1))^2(p''(\alpha_2)+p(\alpha_2))^2\sin^2(\alpha_1-\alpha_2).$$
Since
$$\sin^2(\alpha_1-\alpha_2)= \frac{1}{2}\left( 1-\cos(2\alpha_1)\cos(2\alpha_2)-\sin(2\alpha_1)\sin(2\alpha_2)\right),$$
by taking the double integral we have the conclusion.
\end{proof}

\section{Proof of the main Theorem} \label{principale}

\noindent This section is devoted to prove Theorems \ref{bialy} and \ref{luca}. \\
~\newline
\indent We first study how integrals (\ref{tre}) and (\ref{3}) transform after a certain class of affine transformations. Specifically, coordinates changes given by a rotation around the origin followed by a unitary, diagonal, linear transformation. Clearly, a rotation around the origin of an angle $\sigma \in [0, 2\pi]$ doesn't substantially change \eqref{3} and \eqref{tre}, since the only effect in the integrals is that $p(\alpha)$ is replaced by $p(\alpha+\sigma)$. For $a>0$ let 
$$\varphi_a (x,y)=\left(ax,\frac ya\right)$$ 
be a unitary affine transformation. We denote $D_a=\varphi_a(D)$. It is easy to see that if $p$ is the support function of $D$, then the support function $p_a$ of $D_a$ is given by $$p_a(\psi)=p(\alpha(\psi))\left(\frac 1{a^2}\sin^2(\psi)+a^2\cos^2(\psi)\right)^{\frac{1}{2}}$$ 
where
\begin{equation}\label{10}
 \alpha(\psi)=\int_0^\psi \frac{1}{a^2\cos^2(t)+\frac 1{a^2}\sin^2(t)} dt .
 \end{equation}
In particular, if we apply $\varphi_a$ after a rotation of an angle $\sigma\in [0,2\pi]$, and call the resulting transformation $\varphi_{a,\sigma}$, then the support function of $D_{a,\sigma}:= \varphi_{a,\sigma}(D)$ is
\begin{equation}\label{8}
p_{a,\sigma}(\psi)=p(\alpha(\psi)+\sigma)\left(\frac{1}{a^2}\sin^2(\psi)+a^2\cos^2(\psi)\right)^{\frac{1}{2}} .
\end{equation}
The next result is on the basis of the proof of Theorem \ref{luca}. 
\begin{proposition}\label{P1} There exists a couple $(a,\sigma)\in \R_+\times[0,\frac{\pi}{2}]$ such that
\begin{equation}\label{9}
    \int^{2\pi}_0 p_{a,\sigma}(\psi)\cos(2\psi)\,d\psi=\int^{2\pi}_0 p_{a,\sigma}(\psi)\sin(2\psi)\,d\psi=0.
\end{equation}
\end{proposition}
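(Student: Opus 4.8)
The plan is to recast the two integrals in \eqref{9} as the real and imaginary parts of a single complex quantity and to locate a zero of it by a winding (degree) argument in the parameter $(a,\sigma)$, after which two explicit symmetries confine that zero to the prescribed range. Set $w_a(\psi)=\left(\frac{1}{a^2}\sin^2\psi+a^2\cos^2\psi\right)^{1/2}$ and
$$Z(a,\sigma):=\int_0^{2\pi}p_{a,\sigma}(\psi)\,e^{2i\psi}\,d\psi=\int_0^{2\pi}p(\alpha(\psi)+\sigma)\,w_a(\psi)\,e^{2i\psi}\,d\psi,$$
so that \eqref{9} is exactly the assertion $Z(a,\sigma)=0$, its real and imaginary parts being the cosine and sine integrals. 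First I would dispose of $a=1$: there $\alpha(\psi)=\psi$ and $w_1\equiv 1$, so $Z(1,\sigma)=e^{-2i\sigma}\hat p_2$ with $\hat p_2:=\int_0^{2\pi}p(u)e^{2iu}\,du$. If $\hat p_2=0$ we finish with $(a,\sigma)=(1,0)$; otherwise $\sigma\mapsto Z(1,\sigma)$ is a nonvanishing loop of winding number $-1$ about the origin as $\sigma$ runs over $[0,\pi]$.

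Next I would record two symmetries of $Z$; here I write $\alpha_b$ for the function \eqref{10} with $a$ replaced by $b$, so $\alpha=\alpha_a$. Since the integrand in \eqref{10} has period $\pi$ and integrates to $\pi$ over a period, $\alpha(\psi+\pi)=\alpha(\psi)+\pi$; combined with the $\pi$-periodicity of $w_a$ and of $e^{2i\psi}$, the substitution $\psi\mapsto\psi+\pi$ gives $Z(a,\sigma+\pi)=Z(a,\sigma)$, so $\sigma$ may be regarded as living on the circle $\R/\pi\Z$. Likewise, from $\alpha\!\left(\psi+\tfrac{\pi}{2}\right)=\alpha_{1/a}(\psi)+\tfrac{\pi}{2}$, $w_{1/a}(\psi)=w_a\!\left(\psi+\tfrac{\pi}{2}\right)$ and $e^{2i(\psi+\pi/2)}=-e^{2i\psi}$, the substitution $\psi\mapsto\psi+\tfrac{\pi}{2}$ yields the inversion symmetry
$$Z\!\left(\tfrac1a,\,\sigma+\tfrac{\pi}{2}\right)=-\,Z(a,\sigma).$$
In particular the zero set of $Z$ is invariant under $(a,\sigma)\mapsto\left(1/a,\sigma+\tfrac{\pi}{2}\right)$, which will let me fold a zero with $\sigma\in[\tfrac{\pi}{2},\pi)$ onto one with $\sigma\in[0,\tfrac{\pi}{2})$ at the expense of replacing $a_0$ by $1/a_0\in\R_+$.

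The decisive analytic input is the behaviour for large $a$. As $a\to\infty$ the weight $w_a(\psi)$ concentrates near $\psi=0,\pi$, precisely where $\cos2\psi>0$, while on the region $\cos2\psi<0$ (near $\psi=\tfrac{\pi}{2},\tfrac{3\pi}{2}$) it is of order $a^{-1}$. Since $D$ contains the origin and has strictly positive curvature, $p$ is bounded between two positive constants, and the same bounds hold for $p(\alpha(\cdot)+\sigma)$ uniformly in $\sigma$; splitting the integral for $\Re Z$ according to the sign of $\cos2\psi$ then gives $\Re Z(a,\sigma)\ge c\,a-C\,a^{-1}$ for suitable positive constants, hence $\Re Z(a,\sigma)>0$ for all $\sigma$ once $a\ge A$ with $A$ large. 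Consequently $\sigma\mapsto Z(A,\sigma)$ is a nonvanishing loop of winding number $0$. I expect this uniform estimate — controlling the reparametrisation $\alpha$ and the concentration of $w_a$ simultaneously in $\sigma$ — to be the main obstacle, though the crude two-sided bound on $p$ renders it tractable.

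Finally I would run the degree argument on the annulus $[1,A]\times(\R/\pi\Z)$. If $Z$ had no zero there, the winding number of $\sigma\mapsto Z(a,\sigma)$ would be independent of $a$ by homotopy invariance; but it is $-1$ at $a=1$ and $0$ at $a=A$, a contradiction. Hence $Z(a_0,\sigma_0)=0$ for some $a_0\in(1,A)$ and $\sigma_0\in[0,\pi)$. If $\sigma_0\in[0,\tfrac{\pi}{2}]$ we are done; if $\sigma_0\in(\tfrac{\pi}{2},\pi)$, the inversion symmetry produces the zero $\left(1/a_0,\sigma_0-\tfrac{\pi}{2}\right)$ with $1/a_0\in\R_+$ and $\sigma_0-\tfrac{\pi}{2}\in(0,\tfrac{\pi}{2})$, placing the solution in the required range $\R_+\times[0,\tfrac{\pi}{2}]$.
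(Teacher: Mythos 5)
Your overall architecture is genuinely parallel to the paper's: both arguments rest on the two symmetries $Z(a,\sigma+\pi)=Z(a,\sigma)$ and $Z(1/a,\sigma+\pi/2)=-Z(a,\sigma)$ (the paper's \eqref{14}), combined with sign information at extreme values of $a$ and a topological (winding/connectedness) argument; your use of $a=1$, where $Z(1,\sigma)=e^{-2i\sigma}\hat p_2$ has winding number $-1$, is a clean substitute for one of the paper's two asymptotic regimes, and the fold-down of a zero with $\sigma\in(\pi/2,\pi)$ into $[0,\pi/2]$ via the inversion symmetry is correct.

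The gap is in the large-$a$ estimate, which is precisely the step the paper devotes most of its proof to. Your claim that $w_a$ is of order $a^{-1}$ on the region $\{\cos 2\psi<0\}$ is false: that region is $(\pi/4,3\pi/4)\cup(5\pi/4,7\pi/4)$, and near its endpoints one has $w_a(\psi)\approx a|\cos\psi|\approx a/\sqrt 2$, so the negative contribution to $\Re Z$ is also of order $a$. The crude two-sided bound $0<p_{\min}\le p\le p_{\max}$ then does not determine the sign: since
$\int_0^{\pi/4}\cos\psi\cos 2\psi\,d\psi=\tfrac{2}{3\sqrt 2}\approx 0.471$ while $\int_{\pi/4}^{\pi/2}\cos\psi\cos 2\psi\,d\psi=\tfrac13-\tfrac{2}{3\sqrt 2}\approx -0.138$, your splitting only yields $\Re Z>0$ when $p_{\max}/p_{\min}<3.4$ or so, which fails e.g.\ for a long thin domain with the origin close to its boundary. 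What actually saves the statement --- and what the paper proves through its localization and Taylor expansion after the change of variable $\psi\mapsto\alpha$ --- is that as $a\to\infty$ the reparametrization $\alpha(\psi)$ degenerates to a step function, equal to $0$ on one half-circle and $\pi$ on the other, so that $p(\alpha(\psi)+\sigma)$ is asymptotically constant on each half where $w_a\sim a|\cos\psi|$ lives; this gives $\Re Z(a,\sigma)\sim \tfrac{2a}{3}\left(p(\sigma)+p(\sigma+\pi)\right)>0$ uniformly in $\sigma$, matching the paper's limit $\lim_{a\to\infty}\mathcal I_1(\sigma,a)/a=-c\,(p(\sigma)+p(\sigma+\pi))$ with $-c>0$. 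You need this concentration argument (or an equivalent computation); without it the decisive winding number $0$ at $a=A$ is not established, and the contradiction driving your degree argument evaporates.
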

\begin{proof} By plugging \eqref{8} into the integrals in \eqref{9}, we first make a change of variable in order to get a simpler argument inside $p$. To this end we take the inverse of \eqref{10} and define
 \begin{equation}
  \mathcal{I}_1(\sigma,a):=\int^{2\pi}_0 p_{a,\sigma}(\psi)\cos(2\psi)\,d\psi=
  \int^{2\pi}_0 \frac{p(\alpha+\sigma)(\frac{1}{a^2}\cos^2(\alpha)-a^2\sin^2(\alpha)) }{(\frac{1}{a^2}\cos^2(\alpha)+a^2\sin^2(\alpha))^{\frac{5}{2}}}\, d\alpha
 \end{equation}
 and
 \begin{equation}
  \mathcal{I}_2(\sigma,a):=   \int^{2\pi}_0 p_{a,\sigma}(\psi)\sin(2\psi)\,d\psi=
  \int^{2\pi}_0 \frac{p(\alpha+\sigma)2\sin(\alpha)\cos(\alpha) }{(\frac{1}{a^2}\cos^2(\alpha)+a^2\sin^2(\alpha))^{\frac{5}{2}}}\, d\alpha.
 \end{equation}
 It is easy to check that both the integrals depend continuously on the parameters $a \in \R_+$ and $\sigma \in [0,2\pi]$ and moreover that
 \begin{equation}\label{14}
\mathcal{I}_i(\sigma+\pi,a)=\mathcal{I}_i(\sigma,a) \quad \text{and} \quad \mathcal{I}_i\left(\sigma+\frac\pi 2,\frac 1a\right)=-\mathcal{I}_i(\sigma,a)
\end{equation}
for $i=1,2$. \\
\indent In the sequel, we prove that 
 \begin{equation} \label{11}
     \lim_{a\to 0^+} a \, \mathcal{I}_1\left(\sigma,a\right) = c\left(p\left(\sigma +\frac{\pi}{2}\right)+p\left(\sigma+\frac{3\pi}{2}\right)\right)
 \end{equation}
 where $c:= 2\int^{+\infty}_{0} \frac{t^2-1}{(t^2+1)^{\frac{5}{2}}} dt < 0$.
We consider the function inside the integral $\mathcal{I}_1$, which can be written as:
 \begin{equation*} 
  \left(a^3p(\alpha+\sigma)\right)\left(\frac{\cos^2(\alpha)-a^4\sin^2(\alpha) }{(\cos^2(\alpha)+a^4\sin^2(\alpha))^{\frac{5}{2}}}\right).
 \end{equation*}
 Clearly, the above function goes to $0$ uniformly as $a\to 0^+$ for $\alpha$ outside a neighborhood of $\frac{\pi}{2}$ and $\frac{3\pi}{2}$. \\
 In order to study the integral near $\frac{\pi}{2}$ (the argument near $\frac{3\pi}{2}$ will be the same), we replace $p$ with the first two terms from Taylor's formula and get
$$p (\alpha+\sigma) = p\left(\sigma + \frac{\pi}{2} \right) 
+ p' \left(\sigma + \frac{\pi}{2} \right) \left(\alpha-\frac{\pi}{2}\right)
+ O  \left(\alpha-\frac{\pi}{2}\right) ^2 $$ 
We consequently break locally the integral in three pieces. The constant term yields
\begin{equation} \label{limite}
\begin{split}
    \int^{\frac{\pi}{2} +\delta}_{{\frac{\pi}{2} -\delta}} a^3\left(\frac{\cos^2(\alpha)-a^4\sin^2(\alpha) }{(\cos^2(\alpha)+a^4\sin^2(\alpha))^{\frac{5}{2}}}\right) \, d\alpha &=  \int^{\frac{\pi}{2} +\delta}_{{\frac{\pi}{2} -\delta}} a^3\left(\frac{(1+a^4)\cos^2(\alpha)-a^4 }{((1-a^4)\cos^2(\alpha)+a^4)^{\frac{5}{2}}}\right) \, d\alpha \\ 
    &= 2\int^{\sin(\delta)}_0 a^3 \frac{(1+a^4)s^2 -a^4}{((1-a^4)s^2 +a^4)^{\frac{5}{2}}\sqrt{1-s^2}} \, ds \\
    &= \frac{2}{a}\int^{\frac{\sin(\delta)}{a^2}}_{0} \frac{(1+a^2)x^2-1}{((1-a^4)x^2+1)^{\frac{5}{2}}\sqrt{1-x^2a^4}} \,dx \\ 
    &\simeq \frac{2}{a}\int^{+\infty}_0  \frac{t^2-1}{(t^2+1)^{\frac{5}{2}}} \, dt
    \end{split}
\end{equation}
\noindent where $\simeq$ means that the two quantities are equal for $a \to 0^+$ up to a negligible term. The term in the middle: 
$$ 
\int^{\frac{\pi}{2} +\delta}_{{\frac{\pi}{2} -\delta}} p'(\sigma+\frac{\pi}{2})(\alpha-\frac{\pi}{2})\left(\frac{\cos^2(\alpha)-a^4\sin^2(\alpha) }{(\cos^2(\alpha)+a^4\sin^2(\alpha))^{\frac{5}{2}}}\right)\, d\alpha = 0
$$
for symmetry reasons around $\frac{\pi}{2}$. Finally, by arguments similar to (\ref{limite}), every function in $O  \left(\alpha-\frac{\pi}{2}\right) ^2$ gives a term estimated by $\frac{\delta^2}{a}$, hence negligible with respect to (\ref{limite}). Similar considerations hold for $\alpha=\frac{3\pi}{2}$ and \eqref{11} follows. \\
\indent We proceed to prove this limit for $\mathcal{I}_2$:
\begin{equation}\label{12}
    \lim_{a\to 0^+} \frac{\mathcal{I}_2(\sigma,a)}a=d\left(p'(\sigma+\frac{\pi}{2})+p'(\sigma+\frac{3\pi}{2})\right)  
\end{equation}
where $d:=-4\int^{+\infty}_0 \frac{t^2}{(1+t^2)^{\frac{5}{2}}} \, dt$. In fact, the integrand function in $\mathcal{I}_2$ is
 \begin{equation*}
  \left(a^5p(\alpha+\sigma)\right)\left(\frac{\sin(2\alpha)}{(\cos^2(\alpha)+a^4\sin^2(\alpha))^{\frac{5}{2}}}\right)   
 \end{equation*}
 which goes uniformly to $0$ as $a\to 0^+$ outside a neighborhood of $\frac{\pi}{2}$ and $\frac{3\pi}{2}$. By replacing $p$ with the first two terms of the Taylor formula we see that --in such a case-- the constant term does not give a contribution while the term of degree one near $\frac{\pi}{2}$ gives:
 
 \begin{equation}\label{secondo limite}
 \begin{split}
     \int^{\frac{\pi}{2} +\delta}_{{\frac{\pi}{2} -\delta}} a^5\left(\frac{2\sin(\alpha)\cos(\alpha) (\alpha-\frac{\pi}{2})}{(\cos^2(\alpha)+a^4\sin^2(\alpha))^{\frac{5}{2}}}\right)\, d\alpha&\simeq  \int^{\frac{\pi}{2} +\delta}_{{\frac{\pi}{2} -\delta}} a^5\left(\frac{-2(\alpha-\frac{\pi}{2})^2 }{((\alpha-\frac{\pi}{2})^2+a^4)^{\frac{5}{2}}}\right)\, d\alpha \\ &\simeq -4a\int^{\frac\delta{a^2}}_{0}  \frac{t^2}{(1+t^2)^{\frac{5}{2}}} \,dt.
 \end{split}    
 \end{equation}
Also in such a case, every function in $O \left(\alpha-\frac{\pi}{2}\right)^2$ gives a negligible term with respect to (\ref{secondo limite}). We obtain an analogous term near $\frac{3\pi}{2}$ and, from the above limit, \eqref{12} follows. \\
\noindent As a consequence, by taking into account \eqref{14}, we have also
 $$\lim_{a\to +\infty} \frac{\mathcal{I}_1 (\sigma,a)}a= -\lim_{a\to +\infty} \frac{\mathcal{I}_1\left(\sigma+\frac{\pi}{2},\frac{1}{a}\right)}a=-c(p(\sigma)+p(\sigma +\pi))$$
 and
 $$\lim_{a\to +\infty} a \, \mathcal{I}_2 (\sigma,a)= -\lim_{a\to +\infty} a \, \mathcal{I}_2\left(\sigma+\frac{\pi}{2},\frac{1}{a}\right)=-d(p^{\prime}(\sigma)+p'(\sigma +\pi))$$
 \indent For every $\sigma\in [0,\frac{\pi}{2}]$, let now $\varepsilon_\sigma :[0,+\infty]\rightarrow\R^2$ be the continuous function defined as follows:
 \begin{equation} \label{15}
     \varepsilon_\sigma (a):=(e^{-|\log(a)|}\mathcal{I}_1(\sigma,a),e^{|\log(a)|}\mathcal{I}_2(\sigma,a))
\end{equation}
where $\varepsilon_\sigma(0)$ and $\varepsilon_\sigma(+\infty)$ are given by \eqref{11} and \eqref{12} respectively. Clearly, \eqref{15} defines a continuous family of curves $(\varepsilon_\sigma)_{\sigma \in [0,\frac{\pi}{2}]}$ in $\R^2$. Moreover, since $p$ is strictly positive, there exists a constant $M > 0$ such that, for every $\sigma \in [0,\frac{\pi}{2}]$ it holds:
\begin{equation} \label{16}
  \begin{split}
 \varepsilon_\sigma(0)\in \{ (x,y)\in \R^2: \ x<-M\} \\
 \varepsilon_\sigma(+\infty)\in \{ (x,y)\in \R^2: \ x>M\}.
  \end{split}  
\end{equation}
Thanks to \eqref{14}, we also note that:
$$\varepsilon_0(a)=-\varepsilon_{\frac{\pi}{2}} (\frac{1}{a}),$$ 
which means that the two curves $\varepsilon_0$ and $\varepsilon_{\frac{\pi}{2}}$ wind up around $(0,0)$. This implies, together with \eqref{16}, that there exists a couple $(a,\sigma)\in \R_+\times[0,\frac \pi 2]$ such that $\varepsilon_\sigma(a)=0$. This concludes the proof.  
\end{proof}

\indent We finally prove how formulas \eqref{tre} and \eqref{3} transform after a change of parameter. 
\begin{lemma}\label{7}
    Let $g:[0,S] \rightarrow [0,2\pi]$ be a bijective diffeomorphism. Moreover, let 
$$L(s_1,s_2)=\omega(\gamma(g(s_1)),\gamma(g(s_2))) \quad \text{and} \quad h(\alpha):= g'(g^{-1}(\alpha))=(g^{-1})'(\alpha).$$
Then we have
\begin{equation}\label{4}
\iint_{[0,S]^2}  \left[L_{11}(s_1,s_2)+L_{22}(s_1,s_2)\right] L_{12}(s_1,s_2) \, ds_1 ds_2=- 2A(D)\int_{[0,2\pi]} (p''+p)^2 h^2\, d\alpha    
\end{equation}
and
\begin{equation}\label{5}
 \begin{split}
&2\iint_{[0,S]^2} L_{12}^2(s_1,s_2)\, ds_1ds_2 \\
&= \left( \int^{2\pi}_0 (p''+p)^2h\, d\alpha \right)^2-\left( \int^{2\pi}_0 (p''+p)^2h\cos(2\alpha)\, d\alpha \right)^2 -\left( \int^{2\pi}_0 (p''+p)^2h\sin(2\alpha)\, d\alpha \right)^2.
    \end{split}   
\end{equation}
\end{lemma}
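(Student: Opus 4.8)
The plan is to reduce both identities to the angle-parametrized formulas \eqref{tre} and \eqref{3} by the single change of variable $\alpha=g(s)$, exploiting the fact that Lemma \ref{AREA} holds for \emph{any} smooth periodic parametrization of $\partial D$, not only the angle one. Throughout I write $\Gamma(s):=\gamma(g(s))$ for the reparametrized boundary curve on $[0,S]$. Since $\gamma$ is $2\pi$-periodic in the angle and $g$ carries the endpoints of $[0,S]$ onto those of $[0,2\pi]$, the curve $\Gamma$ is a smooth periodic parametrization of $\partial D$ over $[0,S]$, so all earlier results apply to it verbatim. I will use the relations $g'(s)=h(\alpha)$ and $ds=d\alpha/h(\alpha)$ coming from $h(\alpha)=g'(g^{-1}(\alpha))$.

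For \eqref{4} I would apply Lemma \ref{AREA} directly to $\Gamma$, obtaining
$$\iint_{[0,S]^2}\left[L_{11}+L_{22}\right]L_{12}\,ds_1ds_2=-2A(D)\int_0^S\omega(\Gamma''(s),\Gamma'(s))\,ds.$$
The core computation is $\omega(\Gamma'',\Gamma')$. Differentiating $\Gamma=\gamma(g)$ gives $\Gamma'=\gamma'(g)\,g'$ and $\Gamma''=\gamma''(g)\,(g')^2+\gamma'(g)\,g''$, whence
$$\omega(\Gamma'',\Gamma')=(g')^3\,\omega(\gamma''(g),\gamma'(g))+g''g'\,\omega(\gamma'(g),\gamma'(g))=(g')^3\,\omega(\gamma''(g),\gamma'(g)),$$
the second term vanishing because $\omega(v,v)=0$. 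This cancellation is the only delicate point: it is what removes the $g''$ contributions and keeps the right-hand side clean. Inserting $\omega(\gamma''(\alpha),\gamma'(\alpha))=(p''(\alpha)+p(\alpha))^2$ and substituting $\alpha=g(s)$ turns $\int_0^S(g')^3(p''+p)^2\,ds$ into $\int_0^{2\pi}(p''+p)^2h^2\,d\alpha$, since $(g')^3\,ds=h^3\,d\alpha/h=h^2\,d\alpha$. This is exactly \eqref{4}.

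For \eqref{5} I would instead substitute directly in the integrand. From $\Gamma'=\gamma'(g)\,g'$ one has $L_{12}(s_1,s_2)=g'(s_1)g'(s_2)\,\omega(\gamma'(g(s_1)),\gamma'(g(s_2)))$, hence
$$L_{12}^2(s_1,s_2)=(g'(s_1))^2(g'(s_2))^2\,(p''+p)^2(g(s_1))\,(p''+p)^2(g(s_2))\,\sin^2(g(s_1)-g(s_2)).$$
Changing variables $\alpha_i=g(s_i)$ in the double integral, each factor $(g'(s_i))^2\,ds_i$ becomes $h(\alpha_i)\,d\alpha_i$, so that $\iint_{[0,S]^2}L_{12}^2\,ds_1ds_2$ equals $\iint_{[0,2\pi]^2}(p''+p)^2(\alpha_1)(p''+p)^2(\alpha_2)\,h(\alpha_1)h(\alpha_2)\,\sin^2(\alpha_1-\alpha_2)\,d\alpha_1d\alpha_2$. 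Applying $\sin^2(\alpha_1-\alpha_2)=\tfrac12(1-\cos 2\alpha_1\cos 2\alpha_2-\sin 2\alpha_1\sin 2\alpha_2)$ exactly as in the proof of \eqref{3}, but now with the weight $(p''+p)^2h$ in place of $(p''+p)^2$, separates the integral into the three squared single integrals of \eqref{5}.

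The only genuine obstacle is the bookkeeping of the weight $h$: one must be consistent that the Jacobian $ds=d\alpha/h(\alpha)$ cancels precisely one power of $g'(s)=h(\alpha)$ in each variable. With this in hand, \eqref{5} acquires a single factor $h$ per variable, while \eqref{4}, carrying one extra power of $g'$ from the $(g')^3$ above, acquires $h^2$; everything else is a routine repetition of the angle-parametrized computations of the previous lemma.
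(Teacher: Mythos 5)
Your proposal is correct and follows essentially the same route as the paper: apply Lemma \ref{AREA} to the reparametrized curve $\gamma\circ g$, reduce the right-hand side to $\int_0^S(p''+p)^2(g')^3\,ds$ via the chain rule, and change variables $\alpha=g(s)$; likewise for \eqref{5} by substituting in $L_{12}$ and using the $\sin^2$ identity. The only (cosmetic) difference is that you kill the $g''$ term abstractly via $\omega(v,v)=0$, whereas the paper expands $\Gamma'$ and $\Gamma''$ explicitly in the frame $\{\e_{g(s)},J\e_{g(s)}\}$ and reads off the same cancellation.
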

\begin{proof} By considering \eqref{2} --which holds for every smooth parametrization of $\partial D$-- in the right side we only need to compute
\begin{equation} \label{6}
    \int^S_0 \omega (\gamma (g(s))',\gamma (g(s))'')\,ds. 
\end{equation}
To do that, we just replace 
$$\gamma (g(s))'=\left[p''(g(s))+p(g(s))\right]g'(s)\e_{g(s)}$$
and
\[
\begin{split}
    \gamma (g(s))''=&\left[\left(p'''(g(s)) +p'(g(s))\right)(g'(s))^2 +\left(p''(g(s))+p(g(s))\right)g''(s)\right]\e_{g(s)}\\ &+\left[p''(g(s))+p(g(s))\right](g'(s))^2 J\e_{g(s)}
\end{split}
\]
so that \eqref{6} is equal to
$$ \int^S_0 [p''(g(s))+p(g(s))]^2(g'(s))^3 \, ds$$
and with the change of variable $s=g^{-1} (\alpha)$ we have \eqref{4}. With a similar argument, we obtain \eqref{5}.
\end{proof}
\begin{remark}\label{R1} Since $(p''(\alpha) + p(\alpha))>0$, in Proposition \ref{7} we can choose the usual arc length as parameter   which corresponds to $h(\alpha) = \frac{1}{p''(\alpha) + p(\alpha)}$ and we have
 \begin{equation} \label{REM1}
\iint_{[0,S]^2}  \left[L_{11}(s_1,s_2)+L_{22}(s_1,s_2)\right] L_{12}(s_1,s_2) \, ds_1 ds_2 = - 2A(D)\int_{[0,2\pi]} d\alpha  = - 4\pi A(D)    
\end{equation}   
and 
\begin{equation} \label{REM2}
 \begin{split}
        &2\iint_{[0,S]^2} L_{12}^2(s_1,s_2)\, ds_1ds_2 \\
&= \left( \int^{2\pi}_0 (p''+p)\, d\alpha \right)^2-\left( \int^{2\pi}_0 (p''+p)\cos(2\alpha)\, d\alpha \right)^2 -\left( \int^{2\pi}_0 (p''+p)\sin(2\alpha)\, d\alpha \right)^2.
    \end{split}   
\end{equation}
\end{remark}
\indent We are now ready to prove Theorem \ref{luca}. 
\begin{proof}[Proof of Theorem \ref{luca}]
We first choose an affine transformation $\varphi_{a,\sigma}$ such that Proposition \ref{P1} holds. Then, we parametrize $\partial D_{a,\sigma}$ as in Remark \ref{R1}. With these specific choices of affine transformation and parametrization, and by formulas (\ref{REM1}) and (\ref{REM2}), inequality (\ref{eccola}) of Proposition \ref{DISUG} simply reads:
$$\left( \int^{2\pi}_0 (p_{a,\sigma}''+p_{a,\sigma})\, d\alpha \right)^2 \le 4\pi A(D_{a,\sigma})$$
that is 
$$(l(\partial D_{a,\sigma}))^2 \le 4\pi A(D_{a,\sigma}),$$
where $l(\partial D_{a,\sigma})$ denotes the perimeter length of $\partial D_{a,\sigma}$. Then, by the isoperimetric inequality on the plane, $D_{a,\sigma} = \varphi_{a,\sigma}(D)$ is necessarily a circle. Consequently, by the very construction of $\varphi_{a,\sigma}$, the domain $D$ is an ellipse. 
\end{proof}
\indent We finally remind how Theorem \ref{bialy} follows from Theorem \ref{luca}, see also \cite{B}[Page 154]. 
\begin{proof}[Proof of Theorem \ref{bialy}] It is a standard argument, we sketch the main lines for reader's convenience. We first recall that --since $\Phi$ is a monotone twist map-- a continuous invariant closed curve, which is not null-homotopic, is necessarily a graph of a Lipschitz-continuous function on $\partial D$, see for example \cite{SIB}[Page 9]. Let $\{t_n\}_{n \in \mathbb{Z}}$ be every symplectic billiard configuration whose corresponding phase-space trajectory 
lies on such a curve. Then --since $\Phi$ also preserves an area form-- each finite segment of $\{t_n\}_{n \in \mathbb{Z}}$ is a local maximum, see e.g. \cite{Ban}[Theorem (7.7)]. Moreover (according to \cite{MacKayEtc}[Proposition A1.5.]) the second variation of the functional corresponding to the generating function $L$ is non degenerate. This corresponds to the fact that $\{t_n\}_{n \in \mathbb{Z}}$ has no conjugate points. Therefore, if the phase-space $\mathcal{P}$ is foliated by continuous invariant closed curves not null-homotopic, then the symplectic billiard is without conjugate points. By Theorem \ref{luca}, this means that the billiard table is necessarily an ellipse.
\end{proof}
\bibliographystyle{alpha}
  
 \end{document}